\title{A non-de Finetti theorem for countable Euclidean spaces}
\author{Colin Jahel and Pierre Perruchaud}
\subjclass[2020]{Primary 37A50; Secondary 03C15}
\keywords{invariant measures, Fra\"iss\'e limits, Euclidean spaces, de Finetti theorem}
\thanks{C.J. was supported by the Deutsche Forschungsgemeinschaft (DFG, German Research Foundation) -- project number 467967530. P.P. was partially supported by the Luxembourg National Research Fund -- grant O22/17372844/FraMStA}
\setlist[enumerate,1]{label=(\roman*), font=\normalfont}
\begin{document}
\begin{abstract}

The classical de Finetti Theorem classifies the $\mathrm{Sym}(\mathbb N)$-invariant probability measures on $[0,1]^{\mathbb N}$. More precisely it states that those invariant measures are combinations of measures of the form $\nu^{\otimes\mathbb N}$ where $\nu$ is a measure on $[0,1]$. Recently, Jahel--Tsankov generalized this theorem showing that under conditions on $M$, the group $\Aut(M)$ is de Finetti, i.e. $\Aut(M)$-invariant measures on $[0,1]^M$ are mixtures of measures of the form $\nu^{\otimes M}$ where $\nu$ is a measure on $[0,1]$. In this note, we give an example of a non-de Finetti non-Archimedean group.

\end{abstract}

\maketitle

We call a Polish group $G$ non-Archimedean if the identity admits a basis of open subgroups. Such a group can always be represented as the automorphism group $\mathrm{Aut}(M)$ of a model-theoretic countable structure $M$. Using this representation, for every set $Z$, $G$ acts on $Z^M$ via the shift action which is defined as follow: for $x\in Z^M$, $g\in G$ and $a\in M$, then $(g\cdot x)(a)=x(g^{-1}a)$. We are interested in probability measures that are invariant under this type of action, in the sense that for all $g\in G$ and $A\subset Z^M$ measurable, $\mu(g\cdot A)=\mu(A)$. Under reasonable hypotheses, those can be constructed as generalized convex combinations of ergodic measures.

\begin{defn}\index{Ergodic}Let $G$ be a group and $G\actson X$ be an action preserving some probability measure $\nu$ on $X$. The measure $\nu$ is said to be \emph{$G$-ergodic} if for all $A  \subset X$ measurable such that 
\[ \forall g\in G, \ \nu(A \triangle g\cdot A)=0, \]
we have $\nu(A)\in \{0,1\}$.
\end{defn}

In our case, $G=\mathrm{Aut}(M)$ is a non-Archimedean Polish group, and actions on $Z^M$ where $Z$ is topological are continuous. We focus on the relationship between model-theoretic properties of $M$ and dynamical properties of the action $G\actson Z^M$. The classical de Finetti Theorem states that if $M$ is trivial, i.e. every bijection is an isomorphism, then the only ergodic measures on $\{0,1\}^M$ are of the form $\mathrm{Ber}(p)^{\otimes M}$, where $p\in[0,1]$. This motivates our definition of de Finetti groups.
\begin{defn}
Let $M$ be a countable structure. We say that $\Aut(M)$ is \textbf{de Finetti} if for all $Z$ standard Borel the only invariant, ergodic probability measures on $Z^M$ under the shift action are product measures of the form $\lambda^{\otimes M}$, where $\lambda$ is a probability measure on $Z$.
\end{defn}

Similar notions have been discussed in \cite{Tsankov}, \cite{JJ}, \cite{TsankovNewPreprint}, and \cite{BJJ}. One can find in \cite{JT} a wealth of examples, phrased in the language of Fraïssé limits. A Fraïssé class $\mathcal F$ is a collection of finite structures satisfying some compatibility conditions, and the corresponding Fraïssé limit $M$ is some countably infinite structure characterized by the fact that the finite substructures of $M$ are precisely the elements of $\mathcal F$, and that partial isomorphisms $A\to B$ between finite substructures $A,B\subset M$ extend to full automorphisms of $M$, a property that is called ultrahomogeneity. The structure $M$ that we chose to describe $G$ as $\mathrm{Aut}(M)$ can always be taken to be a Fraïssé limit, therefore we lose no generality in stating all results in these terms. Later, we assume familiarity with the notion of Fraïssé classes, limits, and ultrahomogeneity and we refer to \cite{Hodges} for basic notions. The main result of \cite{JT} can be stated as follows, where the model-theoretic properties of $M$ are defined below.

\begin{theorem}[\cite{JT}]
  \label{th:intro:deFinetti}
  Suppose that $M$ is the Fraïssé limit of some class $\mathcal F$.
  
  If $M$ is $\aleph_0$-categorical, transitive, has no algebraicity, and admits weak elimination of imaginaries, then its isomorphism group $G = \Aut(M)$ is de Finetti.
\end{theorem}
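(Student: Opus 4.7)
My plan is to reduce the product structure to an identification of \(\sigma\)-algebras, using the model-theoretic hypotheses to control invariance under pointwise stabilizers. Fix an ergodic \(G\)-invariant Borel probability measure \(\mu\) on \(Z^M\). By transitivity of \(G\) on \(M\), all single-coordinate marginals coincide with some Borel probability \(\lambda\) on \(Z\), reducing the task to showing that the family \((X_a)_{a\in M}\) is \(\mu\)-independent, where \(X_a\) denotes evaluation at \(a\).

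For a finite subset \(A\subset M\), write \(G_A\le G\) for the pointwise stabilizer, \(\Sigma_A\) for the \(\sigma\)-algebra on \(Z^M\) generated by the coordinates indexed by \(A\), and \(\mathcal I_A\) for the \(\sigma\)-algebra of \(\mu\)-measurable sets that are \(G_A\)-invariant modulo \(\mu\)-null sets. The inclusion \(\Sigma_A\subset\mathcal I_A\) is immediate. The backbone of the argument is the reverse inclusion
\[
  \mathcal I_A=\Sigma_A \qquad\text{for every finite } A\subset M. \qquad (\ast)
\]
Note that for \(A=\emptyset\), this is precisely the \(G\)-ergodicity of \(\mu\). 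Once \((\ast)\) is established in general, independence of disjoint finite blocks \(A,B\subset M\) should follow by combining (i) disintegration of \(\mu\) along \(\Sigma_A=\mathcal I_A\), which forces each conditional measure \(\mu_{x_A}\) to be \(G_A\)-ergodic for \(\mu\)-a.e.\ \(x_A\), with (ii) a mean-ergodic averaging argument using translates of \(B\) by elements of \(G_A\) (made possible by no algebraicity, which ensures that \(G_A\)-orbits on \(M\setminus A\) are infinite), and (iii) an iteration of the whole machinery along an exhaustion of \(M\), ultimately forcing the conditional distribution of \(X_B\) given \(X_A\) to be the unconditional marginal \(\lambda^{\otimes B}\). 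A monotone-class argument then upgrades pairwise to full independence, giving \(\mu=\lambda^{\otimes M}\).

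The technical heart, and what I expect to be the main obstacle, is \((\ast)\) for nonempty \(A\). Morally, a \(G_A\)-invariant measurable set is ``definable over \(A\)'' in a measure-theoretic sense; weak elimination of imaginaries upgrades such implicit definability to definability over the real algebraic closure \(\mathrm{acl}(A)\) via canonical parameters in the home sort; and no algebraicity collapses \(\mathrm{acl}(A)=A\), so the set already lies in \(\Sigma_A\). A rigorous implementation likely proceeds via a Ryll--Nardzewski style parametrization of \(G_A\)-invariant \(L^2\)-functions: \(\aleph_0\)-categoricity gives finitely many \(G_A\)-orbits on tuples of each arity, so the space of invariants decomposes orbit-by-orbit; weak elimination of imaginaries then replaces each orbit-parameter with a tuple of real elements which, by no algebraicity, must sit inside \(A\). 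The delicate point is turning this function-level parametrization into an honest \(\sigma\)-algebra identification, and I expect the bulk of the technical work to lie in this translation between model theory and measure theory.
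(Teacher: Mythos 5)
First, note that this paper does not prove Theorem \ref{th:intro:deFinetti}: it is quoted from \cite{JT} as background, and the present note's contribution is a counterexample showing that the $\aleph_0$-categoricity hypothesis cannot be dropped. So your proposal has to be measured against the argument of \cite{JT} itself, which is organized rather differently from yours: it works at the level of unitary representations. The hypotheses of no algebraicity and weak elimination of imaginaries are equivalent to the group-theoretic identity $\overline{\langle G_A, G_B\rangle} = G_{A\cap B}$ for finite $A,B\subset M$, and $\aleph_0$-categoricity makes $G$ oligomorphic (Roelcke precompact); via the structure theory of unitary representations of such groups one obtains that the orthogonal projections $P_A$ onto the $G_A$-fixed vectors satisfy $P_AP_B=P_{A\cap B}$ in every unitary representation. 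Applying this to the Koopman representation on $L^2(Z^M,\mu)$ and using only the trivial inclusion $L^2(\Sigma_A)\subseteq H^{G_A}$ together with ergodicity (which says $P_\emptyset$ is the projection onto the constants), one computes directly that disjoint finite blocks of coordinates are independent. The reverse inclusion $H^{G_A}\subseteq L^2(\Sigma_A)$ --- your $(\ast)$ --- is never needed.

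That reverse inclusion is precisely where your proposal has a genuine gap. You reduce the theorem to $(\ast)$ but offer only a heuristic for why it should hold (``morally definable over $A$'', ``a rigorous implementation likely proceeds via\ldots''); yet $(\ast)$ is essentially equivalent to the theorem, since the natural way to verify that every $G_A$-invariant measurable set is $\Sigma_A$-measurable is to already know that $\mu$ is a product measure, after which a Hewitt--Savage-type argument applies. Weak elimination of imaginaries controls definable equivalence relations and open subgroups, not arbitrary measurable invariant sets, and bridging that gap is the entire content of the theorem. There is a second, related circularity in your step (ii): to show that the $G_A$-averages of a mean-zero $\Sigma_B$-measurable $f$ tend to $0$, the natural second-moment computation requires the translates $g_i\cdot f$ to be asymptotically orthogonal, i.e.\ requires the independence of $\Sigma_{g_iB}$ and $\Sigma_{g_jB}$ that you are in the course of proving. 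The representation-theoretic input of \cite{JT} is what breaks both circles, and it is absent from your outline.
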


In \cite{JT}, it is shown by way of counterexample that one cannot remove any of the hypotheses of transitivity, no algebraicity, or weak elimination of imaginaries. However, until now it was not known whether the condition of $M$ being $\aleph_0$-categorical was superfluous.
The goal of this note is to fill this gap:

\begin{theorem}\label{thm:main}
    There is $M$ a transitive Fra\"iss\'e limit with no algebraicity that admits weak elimination of imaginaries such that $\mathrm{Aut}(M)$ is not de Finetti.
\end{theorem}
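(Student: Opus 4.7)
The plan is to realise $M$ as the Fra\"iss\'e limit of a natural class of ``rational Euclidean configurations in general position'', verify the model-theoretic hypotheses, and then exhibit a Gaussian field on $M$ as the desired non-product, invariant, ergodic measure. Let $\mathcal F$ be the class of finite structures $(X, q)$ where $q \colon X \times X \to \mathbb{Q}_{\geq 0}$ is a squared-distance function arising from an isometric embedding of $X$ into Euclidean space with affinely independent image. Hereditarity is clear; amalgamation over a common substructure $A$ is realised by the orthogonal construction, in which $B_1, B_2 \in \mathcal F$ are embedded over the common copy of $A$ with the orthogonal complements of $\mathrm{aff}(A)$ in $B_1, B_2$ made perpendicular. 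The Pythagorean theorem ensures the resulting mixed squared distances remain rational, producing a valid $B \in \mathcal F$. Let $M$ be the Fra\"iss\'e limit of $\mathcal F$.

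Transitivity of $\Aut(M)$ on $M$ is immediate from ultrahomogeneity applied to singletons. For no algebraicity, given finite $A \subset M$ and $c \in M \setminus A$, iterated amalgamation produces countably many $c_1, c_2, \ldots \in M$ having the same distance profile to $A$ as $c$ but placed in fresh orthogonal directions; these all lie in the $\Aut(M/A)$-orbit of $c$, so $c \notin \mathrm{acl}(A)$. For weak elimination of imaginaries, I would argue that every definable equivalence class of $n$-tuples in $M$ is determined by its rational distance matrix, and that a canonical basis can be selected among realising tuples in $M$ itself using the amalgamation property.

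To produce the measure, let $(X_a)_{a \in M}$ be the centred real-valued Gaussian field with covariance $k(a, b) = \exp(- d(a, b)^2)$. By Schoenberg's theorem, $k$ is positive definite on Hilbert-embeddable metric spaces, so the field exists; let $\mu$ denote its law on $\mathbb{R}^M$. Invariance of $\mu$ under $\Aut(M)$ is immediate since $k$ depends only on $d$, and $\mu$ is not a product since $X_a, X_b$ are correlated for all $a \ne b$. Ergodicity follows from the classical criterion that a Gaussian action is ergodic iff the generating Hilbert space has no nonzero invariants: any $\Aut(M)$-invariant element of the reproducing kernel Hilbert space of $k$ would have to be constant by transitivity, but the nonzero constants fail to lie in the RKHS since $k$ decays at infinity.

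Finally, $M$ realises a distinct $2$-type over $\emptyset$ for each positive rational $q$ (corresponding to pairs at squared distance $q$), so it is not $\aleph_0$-categorical, consistent with Theorem~\ref{th:intro:deFinetti}. The main obstacle I expect is the detailed verification of weak elimination of imaginaries: the Gaussian-field construction and ergodicity argument are standard, but identifying canonical parameters for arbitrary definable sets in this ``generic Euclidean'' limit requires a careful, structure-specific analysis.
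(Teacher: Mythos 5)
Your overall strategy --- a generic Euclidean Fra\"iss\'e limit carrying an $\Aut(M)$-invariant Gaussian field --- is the same as the paper's (which works with $S$-metric subspaces of the unit sphere pointed at the origin and uses the inner product $\langle a,b\rangle$ as covariance, where you use unpointed rational-squared-distance configurations and the Schoenberg kernel $\exp(-d^2)$). But there are two genuine gaps. The first is weak elimination of imaginaries. This is not a loose end: it is the technical heart of the paper, and your sketch (``every definable equivalence class of $n$-tuples is determined by its rational distance matrix, and a canonical basis can be selected'') is not an argument --- WEI is about coding arbitrary definable sets by real tuples, and it is not clear how distance matrices would accomplish that. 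The paper reduces WEI, via a criterion of Conant and Kruckman for Fra\"iss\'e limits with no algebraicity, to showing that every $C$-definable equivalence relation on a $1$-type over a finite $C$ is equality or trivial, and proves this by a concrete geometric argument: the realizations of the type form a Euclidean sphere, the $\sim$-class of a point is shown to contain a metric ball (by rotating about an axis through a related point), and the type is shown to be uniformly connected. Nothing playing this role appears in your proposal, and without it the theorem is not proved.

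The second gap is the ergodicity criterion you invoke, which is false as stated. A Gaussian action associated to an orthogonal representation $\pi$ is ergodic iff $\pi$ is \emph{weakly mixing}, i.e.\ has no nonzero finite-dimensional invariant subspace; the absence of invariant \emph{vectors} is not sufficient. (Take $G=\mathbb Z$ acting on $\mathbb R^2$ by an irrational rotation: there are no invariant vectors, yet $X^2+Y^2$ is a nonconstant invariant function of the associated Gaussian field, so the action is not ergodic.) Hence ruling out invariant elements of the RKHS proves nothing by itself. The repair is essentially what the paper does: approximate an invariant set $A$ by a cylinder $B=B(x_1,\dots,x_n)$ and exhibit automorphisms $g_k$ carrying $(x_1,\dots,x_n)$ to isometric copies whose correlations with the original tuple tend to $0$, so that $\mu(B\cap g_k\cdot B)\to\mu(B)^2$ and $|\mu(A)-\mu(A)^2|\le 4\varepsilon$. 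In your setting the analogous move is to send the tuple far away so that $\exp(-d^2)\to 0$; this is available by amalgamation, but it is not the argument you gave.
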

In fact, we give a specific example of an ergodic measure which is not a mixture of independent identical distributions.

In the case of non-Archimedean groups, the hypotheses of Theorem \ref{th:intro:deFinetti} rephrase as follows. The definitions here are given from a permutation group perspective and may require some work to prove that they are equivalent their classical model-theoretic formulation.
\begin{enumerate}
\item[1)]$M$ has no algebraicity if for any tuple $\overline{a}\in  M^k$, for any $x\notin \overline{a}$, $G_{\overline{a}} \cdot x$ is infinite, where $G_{\overline{a}}$ denotes the stabilizer of $\overline{a}$ for the action of $\Aut( M)$ on $M$.
\item[2)] $ M$ is $\aleph_0$-categorical if for all $n\in \mathbb N$, $G \actson M^n$ has finitely many orbits.
\item[3)]$M$ has weak elimination of imaginaries if for every proper, open subgroup $V < \Aut(M)$, there exists $k$ and a tuple $\bar a \in M^k$ such that $G_{\overline a} \leq V$ and $[V : G_{\bar a}] < \infty$.
\item[4)] $M$ is transitive if for any $a,b\in M$, there is $g\in \Aut(M)$ such that $g\cdot a=b$.
\end{enumerate}

Our counterexample is constructed as follows. Let $S$ be a dense countable subset of $\mathbb R$. We define the Fra\"iss\'e class $\mathcal{F}$ of finite $S$-metric spaces $(x_1,\ldots,x_n)$ that are subspaces of the unit sphere of $\mathbb R^n$ for some $n$ and such that $(0,x_1,\ldots,x_n)$ is affinely independent in $\mathbb R^n$. The morphisms are given by isometries of the spaces pointed at zero. In the equivalent model theoretic point of view, we have a binary relation for every distance in $S$ and a constant symbol for zero, and the theory contains enough statements to impose that the finite models embed as maximally spanning subsets of the unit sphere, with the exception of zero, which we send to zero. For the rest of this note, we will denote by $M$ the Fra\"iss\'e limit of that class and by $G$ the automorphism group of $M$.

The class $\mathcal{F}$ has been studied by Nguyen Van Th\'e, who proves that this  is a Fra\"iss\'e class, and that it has the strong amalgamation property (see Proposition 7 in \cite{NVT10}), which is equivalent to say that $M$ has no algebraicity. It is easy to see that the Fra\"iss\'e limit of that class is not $\aleph_0$-categorical, as there are infinitely many orbits under the diagonal action of $G$ on $M^2$. In particular, if we prove that $M$ weakly eliminates imaginaries and that there is a non-product $G$-ergodic invariant measure on a product space $Z^M$, then we get Theorem \ref{thm:main}.

We prove 
\begin{prop}
Under these condition the space weakly eliminates imaginaries.
\end{prop}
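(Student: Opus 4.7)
The plan is to show that any proper open subgroup $V \le G = \Aut(M)$ admits a finite $V$-invariant set $A \subseteq M$ whose pointwise stabiliser $G_A$ is contained in $V$. Once this is done, enumerating $A$ as a tuple $\bar a$, the restriction map $V \to \mathrm{Sym}(A)$ has kernel $G_{\bar a} = G_A$, and we immediately get $[V : G_{\bar a}] \le |A|! < \infty$.

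The construction of $A$ rests on a \emph{density lemma}: for any two finite tuples $\bar c, \bar c'$ in $M$, the topological closure of the abstract subgroup $\langle G_{\bar c}, G_{\bar c'}\rangle$ in $G$ equals $G_{\bar c \cap \bar c'}$, where the intersection is taken as sets. The inclusion $\subseteq$ is immediate. For the reverse inclusion, given $h \in G_{\bar c \cap \bar c'}$ and a finite enlargement $\bar e \supseteq \bar c \cup \bar c'$ inside $M$, I would construct $g \in \langle G_{\bar c}, G_{\bar c'}\rangle$ with $g|_{\bar e} = h|_{\bar e}$ by a back-and-forth, iteratively realising inside $M$ intermediate tuples that interpolate the types over $\bar c$ and over $\bar c'$. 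Each step would rely on strong amalgamation for $\mathcal F$ (Proposition 7 of \cite{NVT10}) to produce the intermediate tuple. Because open subgroups are closed in any topological group, this yields the corollary that any open $V$ containing $G_{\bar c}$ and $G_{\bar c'}$ also contains $G_{\bar c \cap \bar c'}$.

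Armed with the corollary, I would run a minimality argument. Choose $\bar b = (b_1, \ldots, b_n)$ with $V \supseteq G_{\bar b}$ and $n$ as small as possible (here $n \ge 1$ since $V$ is proper). Suppose that some orbit $V \cdot b_i$ is infinite, say $V \cdot b_1$. Because $\bar b$ is finite, we can find $v \in V$ with $v(b_1) \notin \bar b$. Then $G_{v \cdot \bar b} = v G_{\bar b} v^{-1} \subseteq V$, and the set intersection $\bar b \cap (v \cdot \bar b)$ contains at most $n - 1$ elements, since $v(b_1) \notin \bar b$ rules out one slot of $v \cdot \bar b$ from appearing in $\bar b$. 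The corollary then forces $V \supseteq G_{\bar b \cap v \cdot \bar b}$, contradicting the minimality of $n$. Hence every $V \cdot b_i$ is finite, and $A = \bigcup_i V \cdot b_i$ is a finite $V$-invariant set containing $\bar b$ with $G_A \subseteq G_{\bar b} \subseteq V$.

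The main obstacle will be the density lemma. The back-and-forth has to produce, at each stage, an intermediate configuration inside $M$ whose distances simultaneously match two distinct prescribed patterns, while keeping the augmented finite structure realisable on the unit sphere of some $\mathbb R^N$ with $0$ affinely independent of everything else. This geometric constraint is stable under enlarging the ambient dimension, which is exactly what makes strong amalgamation for $\mathcal F$ go through and keeps the back-and-forth inside the class. Once the lemma is in place, the minimality argument and the final counting step are short and formal.
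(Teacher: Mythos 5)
Your reduction of weak elimination of imaginaries to the ``density lemma'' is a legitimate and standard group-theoretic reformulation (for a structure with no algebraicity, weak elimination of imaginaries is essentially equivalent to the statement that $\overline{\langle G_{\bar c},G_{\bar c'}\rangle}=G_{\bar c\cap\bar c'}$ for all finite tuples), and the minimality argument you run afterwards is correct. But this means the density lemma carries the entire content of the proposition, and your sketch of it has a genuine gap: you justify the back-and-forth step solely by the strong amalgamation property of $\mathcal F$. Strong amalgamation cannot suffice. Consider the Fra\"iss\'e limit of finite sets equipped with an equivalence relation having arbitrarily many classes of arbitrary sizes: it is transitive, has strong amalgamation (hence no algebraicity), yet for two points $a,b$ in the same infinite class, both $G_a$ and $G_b$ stabilize that class setwise, so $\langle G_a,G_b\rangle$ is contained in the (proper, open) setwise stabilizer of the class while $G_{\emptyset}=G$ is not --- the density lemma fails, and indeed this structure does not weakly eliminate imaginaries. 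So any proof of your lemma must use specific geometric features of the spherical metric spaces, not just amalgamation.

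That geometric work is exactly what the paper's proof supplies, via a different route: it invokes the Conant--Kruckman criterion (no algebraicity plus the fact that every $C$-definable equivalence relation on a $1$-type is equality or trivial), and then shows triviality of such relations by (i) realizing the $1$-type over $C$ as a two-dimensional sphere $\tilde p$ of positive radius orthogonal to the span of $C$ and rotating a related pair $(x,y)$ about the axis through $y$ to show that all sufficiently close pairs are related, and (ii) a uniform connectedness argument producing, for any two points $a,b$ of the type, a third realization $z$ close to both and in general position. An analogous analysis --- showing that alternating elements of $G_{\bar c}$ and $G_{\bar c'}$ generate enough rotations to approximate any element of $G_{\bar c\cap\bar c'}$ --- would be needed to make your back-and-forth work, and it is precisely the step your proposal leaves blank. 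As written, the argument would ``prove'' weak elimination of imaginaries for any strong amalgamation class, which is false.
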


\begin{proof}
We use the following criterion which is Corollary 7.6 in \cite{Conant}, building on Proposition 5.5.3 in \cite{ThesisKruckman}:
\begin{prop}
Let $M$ be a Fra\"iss\'e limit with no algebraicity, if for all finite $C \subset M$ and $p \in S_1(C)$, any $C$-definable equivalence relation $E(x, y)$ on $p$ is
either equality or trivial, then $M$ weakly eliminates imaginaries
\end{prop}

Accordingly, fix $C\subset M$ finite and some type $p\in S_1(C)$, and let $E$ be some equivalence relation invariant under $G_C$. If $E$ is the equality, then we are done; otherwise, let $x\neq y$ in $p$ be such that we do have $E(x,y)$. Define the relation $\sim$ on $p$ where two points are related if we can go from one to the other using finitely many (possibly zero) jumps whose type over $C$ is either that of $(x,y)$ or of $(y,x)$. It is clearly an equivalence relation, and if $a\sim b$ then we must have $E(a,b)$ since $E$ is invariant by $G_C$. Moreover, by ultrahomogeneity, the type of $(a,b)$ over $C$ depends only on the distance between $a$ and $b$ when they are both in $p$, so whether $a\sim b$ or not depends only on the distance between $a$ and $b$.

We will show two things. First, that there exists an $\varepsilon>0$ such that for every $r<\varepsilon$ in $S$, one may find $z\in p$ such that $|z-x|=r$ and $x\sim z$, which by the previous comment implies that for all $z\in M$ such that $|z-x|<\varepsilon$, $x\sim z$. Second, that $p$ is uniformly connected, i.e. that if $\delta>0$ and $U\subset p$ is a subset such that $B_a(\delta)\subset U$ for all $a\in U$, then either $U=p$ or $U$ is empty. This will show that the class of $x$ for $\sim$ is $p$ itself, hence it will be true for $E$ also and $E$ will be trivial.
\medskip

Let us consider the first assertion. We assume up to isomorphism that $C\cup\{x,y\}$ is isometrically embedded in $\mathbb S^{n+2}\subset\mathbb R^{n+3}$ in such a way that $C$ generates $\mathbb R^n\times\{(0,0,0)\}$ as a vector space. Consider the collection $\tilde p$ of points $a\in\mathbb S^{n+2}$ such that $|a-c|=|x-c|$ for all $c\in C$. Since the inner product of any $a\in\mathbb S^{n+2}$ with a given $c\in C$ is determined by the distance between those points, we know that the projection of $a$ on the space generated by $C$, or equivalently its first $n$ coordinates, are determined by the distances to the points of $C$; in particular, the points of $\tilde p$ all have coordinates of the form $(x_1,\ldots,x_n,a_{n+1},a_{n+2},a_{n+3})$. In the other direction, any point of this form on the sphere must be in $\tilde p$, since the inner product with elements of $C$ does not depend on the last coordinates. This shows that $\tilde p$ is the intersection of $\{(x_1,\ldots,x_n)\}\times\mathbb R^3$ with the unit sphere, and in particular it is a two-dimensional sphere of radius $\rho$ given by $\rho^2=x_{n+1}^2+x_{n+2}^2+x_{n+3}^2>0$. The group of isometries of $\mathbb R^{n+3}$ fixing $C$ acts on $\tilde p$ like the rotation group of the sphere $\tilde p$.

Note first that we cannot have $y=(x_1,\ldots,x_n,-x_{n+1},-x_{n+2},-x_{n+3})$, otherwise $y$ would belong to the vector space generated by $C$ and $x$. This shows that $x$ and $y$ are not antipodal on the sphere $\tilde p$. In particular, the image $x(\theta)$ of $x$ under the rotation of angle $\theta$ around the axis of $\tilde p$ passing through $y$ (the orientation does not matter but is fixed) belongs to the vector space generated by $C$, $x$ and $y$ only if $\theta$ is a multiple of $\pi$, and $x\neq x(\theta)$. Set $\varepsilon=|x(\pi)-x|>0$. If $r<\varepsilon$ is in $S$, then by continuity we can find some $0<\theta<\pi$ such that $|x(\theta)-x|=r$. One can check readily that $C\cup\{x,y,x(\theta)\}$ is in fact an element of $\mathcal F$, so it can be identified with a subset of $M$. Moreover, the rotation sending $C\cup\{x,y\}$ to $C\cup\{x(\theta),y\}$ extends to an isomorphism of $M$ by ultrahomogeneity, and $(x(\theta),y)$ has the type of $(x,y)$ over $C$. This shows that $x\sim y\sim x(\theta)$, and as argued before the function $(a,b)\mapsto\mathbf1_{a\sim b}$ is uniformly continuous.

For later use, note that since $x$ has norm one, $\rho$ could have been described by some function of the distances from $x$ to the points of $C$, hence $\rho$ depends only on $C$ and $p$, not on the particular representation in $\mathbb R^{n+3}$. Moreover, we could have considered more points than just $\{x,y\}$, and concluded by the same arguments that every finite set of $p$ embeds isometrically into a Euclidean sphere of radius $\rho$, where no two points are antipodal.
\medskip

About the uniform connectedness, let $U\subset p$ be a set such that both $U$ and $U^\complement$ are non-empty. Suppose by contradiction that the distance between $U$ and $U^\complement$ is positive. Let $2\rho\sin(\theta/2)$ be this distance, with $0<\theta<\pi$ and $\rho$ defined as above. We can write it this way because any two points of $p$ are at distance less than $2\rho$; $\theta$ is the minimal angle between the vectors on this sphere. Choose $a\in U$, $b\in U^\complement$ such that $d(a,b)<2\rho\sin(\phi/2)$ with $\theta<\phi<\min(2\theta,\pi)$. If we find $z\in p$ such that $|z-a|,|z-b|<2\rho\sin(\phi/4)$, then we will have a contradiction, since then $z$ could not be a member of either $U$ or $U^\complement$. Let us, in the same way as before, consider $C\cup\{a,b\}$ as a subset of $\mathbb S^{n+2}\subset\mathbb R^{n+3}$ in such a way that $C$ generates $\mathbb R^n\times\{(0,0,0)\}$. The set $\tilde p$ of points whose distances to $C$ coincide with that of $a$ is again a sphere of radius $\rho$, and $a$ and $b$ are again not antipodal on this sphere. We want to choose a point $z\in\tilde p$ satisfying the following conditions. We want
\begin{itemize}
    \item the angle between $z$ and $b$ and $z$ and $a$, inside the sphere $\tilde p$, to be less than $\phi/2$;
    \item $z$ not to belong to the vector space generated by $C\cup\{a,b\}$; 
    \item the distances between $z$ and $a$ and $z$ and $b$ to be elements of $S$.
\end{itemize}

The first condition is open, and non-empty since the angle between $a$ and $b$ is less than $\phi$. The second is open and dense, since the intersection of this vector space with the sphere $\tilde p$ is the great circle through $a$ and $b$. The final one is dense in neighborhoods of points $z$ where the circles on $\tilde p$ centered at $a$ and $b$ intersect transversally; this holds unless $z$ lies on the great circle through $a$ and $b$, hence the condition is everywhere dense. The intersection of those conditions is then non-empty, and we found a point $z$ such that $C\cup\{a,b,z\}$ is in $\mathcal F$ and the distances to $a$ and $b$ are less than $2\rho\sin(\phi/4)$. Embedding this space in $M$ appropriately gives the expected contradiction.

\end{proof}

\begin{prop}
   There is a $G$-ergodic invariant measure on $\mathbb{R}^M$ that is not a product measure.
\end{prop}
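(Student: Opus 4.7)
The plan is to construct a natural Gaussian measure on $\mathbb R^M$ arising from the spherical geometry of $M$. By definition of $\mathcal F$, every finite $A\subset M$ embeds isometrically into a Euclidean unit sphere so that $\{0\}\cup A$ is affinely independent; equivalently, the kernel $K(a,b)=1-\tfrac12 d(a,b)^2$ is positive definite on $M$. By the standard reproducing-kernel construction we may realize $M$ inside the unit sphere of a separable real Hilbert space $H$ with total image. Since the structure on $M$ includes a distinguished $0$ (identified with the origin of $H$), each $g\in G$ is a distance- and $0$-preserving bijection and extends uniquely to an orthogonal transformation $\pi(g)$ of $H$, yielding a continuous orthogonal representation.

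Let $\mu$ be the centered Gaussian measure on $\mathbb R^M$ with covariance $K$, equivalently the law of $(X_a)_{a\in M}$ with $X_a=\langle\xi,a\rangle_H$ for $\xi$ a standard Gaussian on $H$. Then $\mu$ is $G$-invariant because $K$ is. It is not a product measure: since $S$ is dense in $\mathbb R$, we may pick $a\ne b\in M$ with $d(a,b)\ne\sqrt 2$, and then $X_a,X_b$ are jointly Gaussian with nonzero covariance, hence not independent.

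For ergodicity, we apply the classical principle that a centered Gaussian measure invariant under an orthogonal representation $\pi$ is ergodic (in fact weakly mixing) precisely when $\pi$ is weakly mixing, i.e.\ for every finite $F\subset H$ and $\varepsilon>0$ some $g\in G$ satisfies $|\langle\pi(g)v,w\rangle|<\varepsilon$ for all $v,w\in F$. Since $M$ is total in $H$, it suffices to verify this for $v,w$ in a tuple $a_1,\dots,a_k\in M$. The idea is to produce an isometric near-orthogonal copy inside $M$: embed $\{a_i\}$ in a unit sphere $\mathbb S^{n-1}\subset\mathbb R^n$, place an isometric copy $a_1',\dots,a_k'$ inside the unit sphere of an orthogonal extension of $\mathbb R^n$, and perturb slightly so that every cross-distance $d(a_i,a_j')$ lies in $S$ while remaining close to $\sqrt 2$. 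As in the preceding proposition, the resulting finite set lies in $\mathcal F$, and by ultrahomogeneity the partial isometry $a_i\mapsto a_i'$ extends to an element $g\in G$ with $|\langle\pi(g)a_i,a_j\rangle|<\varepsilon$.

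The main difficulty is this final amalgamation: one must simultaneously make the cross inner products small, push every cross distance into the dense set $S$, and preserve affine independence of $\{0\}\cup\{a_i,a_i'\}_i$. This is a density and perturbation argument in the same spirit as the proof of the preceding proposition, and should be manageable by the same techniques. Once it is established, the standard Wiener-chaos decomposition of Gaussian measures upgrades weak mixing of $\pi$ to ergodicity of $\mu$, yielding the desired non-product $G$-ergodic invariant measure on $\mathbb R^M$.
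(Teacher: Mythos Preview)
Your construction is identical to the paper's: both build the centered Gaussian measure on $\mathbb R^M$ with covariance $\langle a,b\rangle$ coming from an isometric embedding of $M$ into a separable Hilbert space, and both observe that it is $G$-invariant and non-product. The only difference is in how ergodicity is established. The paper argues directly: approximate an almost-invariant set $A$ by a cylinder $B=B(x_1,\dots,x_n)$, find (by exactly the density/affine-independence perturbation you sketch) an isometric copy $(z_1^k,\dots,z_n^k)$ with $\sup_{i,j}|\langle x_i,z_j^k\rangle|\to 0$, move to it by some $g_k\in G$ via ultrahomogeneity, and use that the covariance matrices converge so that the Gaussian laws converge in total variation, giving $\mu(B\cap g_k\cdot B)\to\mu(B)^2$ and hence $\mu(A)=\mu(A)^2$. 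You instead invoke the abstract criterion that a Gaussian action is ergodic (indeed weakly mixing) iff the underlying orthogonal representation is weakly mixing, and verify weak mixing of $\pi$ using the same near-orthogonal copies. Both routes rest on the identical geometric ingredient; yours is conceptually cleaner and situates the example in the general theory of Gaussian actions, while the paper's hands-on computation is self-contained and avoids appealing to the Wiener-chaos machinery.
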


\begin{proof} First remark that we can isometrically embed $M$ in $\ell^2(\mathbb N)$. We will use the Gaussian space on $\ell^2(\mathbb{N})$, the existence of which is argued in \cite{Janson} Theorem 1.23. A Gaussian space on $\ell^2(\mathbb N)$ is a family of random variables $(\eta_a)$ indexed by $\ell^2(\mathbb N)$ such that for all $a\in \ell^2(M)$ $\eta_a$ has a centered Gaussian distribution and the correlation between $\eta_a$ and $\eta_b$ is $\langle a , b \rangle$ for all $a,b\in \ell^2(\mathbb{N})$. Moreover, this distribution $\mu$ is invariant by isometries. Since such a distribution is unique, it must be invariant by isometries.

There is therefore a family of Gaussian random variables $(\eta_a)_{a\in M}$ such that the correlation between $\eta_a$ and $\eta_b$ is $\left\langle a,b \right\rangle$ for $a,b\in M$. This measure is a non product $\Aut(M)$-invariant measure on $\mathbb R^M$.

All that remains to do is to prove ergodicity of the associated measure. Let $A\subset R^M$ be such that for all $g\in G$, $\mu(A\triangle g \cdot A)=0$. Let $\varepsilon>0$, there is a cylinder, i.e. a measurable set depending only on the values of $\eta$ at finitely many elements of $M$, $B=B(x_1,\ldots,x_n)$, where $x_1,\ldots,x_n\in M$, such that
\[\mu(A\triangle B)\leq \varepsilon.\] 

Let us first assume that $S$ is a ring closed under taking square roots, as it is an easier version of the full argument. In this case, there is $(y_1,\ldots,y_n)\in M$ orthogonal and isometric to $(x_1,\ldots,x_n)$, therefore there is $g\in \mathrm{Aut}(M)$ sending $(x_1,\ldots,x_n)$ to $(y_1,\ldots,y_n)$. This implies that $g\cdot B $ is independent from $B$ and we get

\begin{align*}
    |\mu(A)-\mu(A)^2|&=|\mu(A\cap g\cdot A)-\mu(A)^2| \\ &\leq |\mu(A)^2-\mu(B\cap g \cdot B)|+|\mu(B\cap g \cdot B)-\mu(A \cap g \cdot A)| \\
    &\leq |\mu(A)^2-\mu(B)^2| + |\mu(B\triangle A)|  \\
    &+|\mu(g\cdot B\triangle g \cdot A)| \\
    &\leq 4\varepsilon.
\end{align*}

If $S$ is not closed under taking the square root, for any $\varepsilon >0$ and $k\in \N$, we can find $(z^k_1,\ldots,z^k_n)\in M$ isometric to $(x_1,\ldots,x_n)$ such that $\sup_i |\langle x_i,z^k_j \rangle | \leq 1/k$. Such a family exists since $S$ is dense and being a Euclidean space with no affine dependence is an open condition on the distance function.
As $k$ goes to infinity, the covariance of $(\eta_{x_1},\ldots,\eta_{x_n},\eta_{z^k_1},\ldots,\eta_{z^k_n})$ converges to that of a pair of independent vectors distributed as $(\eta_{x_1},\ldots,\eta_{x_n})$. Since the vectors are Gaussian, we know that the convergence actually holds in total variation distance (say for instance using Scheffé's lemma), and in particular $\mu(B\cap g_k\cdot B)$ converges to $\mu(B)^2$.

The rest of the proof is just as in the case closed under taking square roots.
\end{proof}

This example also gives a counterexample to the removal of the $\aleph_0$-categorical hypothesis to  the main theorem of \cite{JT}, which states

\begin{theorem} Let $M$ be a transitive, $\aleph_0$-categorical Fra\"iss\'e limit with no algebraicity that admits weak elimination of imaginaries. Consider the action $\Aut(M) \actson \LO(M)$ induced on the linear orders of $M$. Then exactly one of the following holds:
  \begin{enumerate}
  \item The action $\Aut(M) \actson \LO(M)$ has a fixed point (i.e., there is a definable linear order on $M$);
  \item The action $\Aut(M) \actson \LO(M)$ has a unique invariant measure.
  \end{enumerate}
\end{theorem}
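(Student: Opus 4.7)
The plan is to reduce the dichotomy to the de Finetti theorem of Theorem~\ref{th:intro:deFinetti}. Let $\mu_0$ denote the canonical ``random order'' measure on $\LO(M)$, obtained as the pushforward of $\mathrm{Unif}[0,1]^{\otimes M}$ under the equivariant order map $f\mapsto{<}_f$, where $a<_fb$ iff $f(a)<f(b)$. This measure is always $\Aut(M)$-invariant and diffuse, so any fixed point $<_0$ of the action would produce a second, distinct invariant measure $\delta_{<_0}\neq\mu_0$. In particular, cases~(i) and~(ii) are mutually exclusive, and it suffices to prove that \emph{every} $\Aut(M)$-invariant probability on $\LO(M)$ coincides with $\mu_0$: for then case~(i) is incompatible with the hypotheses, and case~(ii) is automatically realized with unique invariant measure~$\mu_0$.

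Given such an invariant $\mu$, the central step is to build an $\Aut(M)$-invariant probability $\nu$ on $[0,1]^M$, concentrated on injective functions, satisfying $(f\mapsto{<}_f)_\ast\nu=\mu$. Assuming the lift exists, Theorem~\ref{th:intro:deFinetti} applies: the ergodic decomposition of $\nu$ writes it as a mixture $\int\lambda^{\otimes M}\,d\kappa(\lambda)$ of product measures. Concentration of $\nu$ on injective functions forces $\kappa$-almost every $\lambda$ to be atomless; but for any atomless $\lambda$, exchangeability of i.i.d.\ samples ensures that the pushforward of $\lambda^{\otimes M}$ under the order map is $\mu_0$, independently of $\lambda$ (the relative order of $n$ i.i.d.\ atomless draws is uniform over the $n!$ permutations). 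Pushing $\nu$ forward along the order map therefore yields $\mu=\mu_0$.

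To produce the lift, I would sample $<\sim\mu$ together with an independent i.i.d.\ uniform array $(\xi_a)_{a\in M}$, and compose with an $\Aut(M)$-equivariant measurable map $(<,\xi)\mapsto X\in[0,1]^M$ rearranging the $\xi_a$'s so that $<_X={<}$. The main obstacle is that the seemingly natural fiber-wise constructions---assigning to each $<$ the order-statistics law (the distribution of $|A|$ sorted i.i.d.\ uniforms placed according to $<|_A$ on each finite subset $A\subset M$)---fail to be Kolmogorov-consistent under restriction to smaller subsets, since taking order statistics does not commute with subset selection. The correct construction must instead couple $\mu$ with the uniform array in a way that is simultaneously consistent on finite subsets and equivariant under the diagonal $\Aut(M)$-action; this is the technical heart of the argument. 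Once such a canonical rearrangement is in place, the projection argument above gives $\mu=\mu_0$ for every invariant measure, and the dichotomy follows immediately from the mutual exclusion established in the first paragraph.
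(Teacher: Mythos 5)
There is a genuine gap, and it is structural rather than technical. You have read the theorem as asserting unconditional unique ergodicity, but it is a true dichotomy in which \emph{both} branches occur: $(\mathbb{Q},<)$ is a transitive, $\aleph_0$-categorical Fra\"iss\'e limit with no algebraicity and weak elimination of imaginaries, and its own order is a fixed point of $\Aut(\mathbb{Q},<) \actson \LO(\mathbb{Q})$, so alternative (i) holds for it. Your reduction asserts that ``case (i) is incompatible with the hypotheses'' and sets out to prove that every invariant measure on $\LO(M)$ equals $\mu_0$; for $M=(\mathbb{Q},<)$ this is false, since $\delta_{<}$ is invariant and distinct from the diffuse measure $\mu_0$. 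The failure is located exactly at the step you call the technical heart: the equivariant lift is not a matter of fixing Kolmogorov consistency --- it provably does not exist in general. Indeed, take $\mu=\delta_{<}$ on $\LO(\mathbb{Q})$ and suppose $\nu$ were an invariant lift on $[0,1]^{\mathbb{Q}}$ concentrated on injective functions with $(f\mapsto{<_f})_{*}\nu=\mu$. By Theorem \ref{th:intro:deFinetti} and your own (correct) computation --- ergodic components are products $\lambda^{\otimes M}$, injectivity forces $\lambda$ atomless, and atomless i.i.d.\ samples induce the uniform relative order --- the pushforward would be $\mu_0\neq\delta_{<}$, a contradiction. So the existence of your lift already encodes which side of the dichotomy $M$ falls on, and no canonical rearrangement can produce it unconditionally; a correct proof must detect whether $\mu$ is degenerate (equivalently, whether a definable order exists) and only in the non-degenerate situation identify invariant measures with $\mu_0$.

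For the comparison you were asked about: the note under review does not prove this theorem at all --- it quotes it as the main result of \cite{JT}, and the point of the note is precisely that the conclusion fails once $\aleph_0$-categoricity is dropped (the Gaussian order on the Euclidean structure $M$ is a second, fully supported invariant random order distinct from the uniform one). The argument in \cite{JT} is organized around the dichotomy from the outset rather than around an unconditional lifting step. The pieces of your proposal that survive are the mutual exclusivity of (i) and (ii) via $\mu_0$, and the conditional implication ``invariant lift concentrated on injections $\Rightarrow$ $\mu=\mu_0$,'' which is a correct use of the de Finetti theorem; what is missing is the entire mechanism for the fixed-point branch, which is exactly the hard content of ``exactly one of the following holds.''
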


Indeed our example satisfies the hypotheses, except for $\aleph_0$-categoricity, but not the conclusion: it does not have a definable linear ordering (by ultrahomogenous extension of transpositions), and the random linear ordering induced by ordering the Gaussian values is not the uniform one, therefore there are at least two random $G$-invariant linear orderings. This order is fully supported, since the event $\{\eta_{x_1}<\cdots<\eta_{x_k}\}$ corresponds to outcomes when a non-degenerate Gaussian vector takes values into a non-empty open set.

\bibliographystyle{alpha}
\bibliography{bib}

			\bigskip
			\footnotesize
			
			\noindent C.~Jahel, \textsc{Institut fur Algebra, Technische Universität Dresden, Germany}\par\nopagebreak\noindent
			\textit{E-mail address: }\texttt{colin.jahel@tu-dresden.de}
			
			\medskip
			
			\noindent P.~Perruchaud, \textsc{Université du Luxembourg, Département de Mathématiques, Grand Duché du Luxembourg}\par\nopagebreak\noindent
			\textit{E-mail address: }\texttt{pierre.perruchaud@uni.lu}
	
			\medskip

\end{document}